\newtheorem*{corollary*}{Corollary}
\newtheorem*{theorem*}{Theorem}
\newtheorem{theorem}{Theorem}[section]
\newtheorem{lemma}[theorem]{Lemma}
\newtheorem*{claim*}{Claim}
\newtheorem*{conjecture}{Conjecture}
\theoremstyle{definition}
\theoremstyle{remark}
\numberwithin{equation}{theorem}
\renewcommand*\env@matrix[1][\
arraystretch]{%
  \edef\arraystretch{#1}%
  \hskip -\arraycolsep
  \let\@ifnextchar\new@ifnextchar
  \array{*\c@MaxMatrixCols c}}
\begin{document}

\title{On a conjecture about Morita algebras}
\date{\today}

\subjclass[2010]{Primary 16G10, 16E10}

\keywords{Morita algebras, dominant dimension, tilting modules}

\author{Bernhard B{\"o}hmler }
\address{Institute of algebra and number theory, University of Stuttgart, Pfaffenwaldring 57, 70569 Stuttgart, Germany}
\email{bernhard.boehmler@googlemail.com}
\author{Ren\'{e} Marczinzik}
\address{Institute of algebra and number theory, University of Stuttgart, Pfaffenwaldring 57, 70569 Stuttgart, Germany}
\email{marczire@mathematik.uni-stuttgart.de}

\begin{abstract}
We give an example of a Morita algebra $A$ with a tilting module $T$ such that the algebra $End_A(T)$ has dominant dimension at least two but is not a Morita algebra.
This provides a counterexample to a conjecture by Chen and Xi from \cite{CX}.
\end{abstract}

\maketitle
\section*{Introduction}
In this article we assume that all rings are finite dimensional algebras over a field $K$ and all modules are finitely generated right modules unless stated otherwise.
Recall that the dominant dimension $domdim(M)$ of a module $M$ with minimal injective coresolution $(I^i)$ is defined as zero in case $I^0$ is not projective and $domdim(M):= \sup \{ n \geq 0 | I^i$ is projective for $i=0,1,...,n \}+1$ otherwise. The dominant dimension of an algebra is defined as the dominant dimension of the regular module. It is well known that an algebra has dominant dimension at least one if and only if there is a minimal faithful projective-injective right module $eA$ for some idempotent $e$ of $A$.
All Nakayama algebras have dominant dimension at least one and therefore have a minimal faithful projective-injective module given by the direct sum of all indecomposable projective-injective modules, see for example chapter 32 of \cite{AnFul}.
For more on the dominant dimension we refer to \cite{Yam}.
In \cite{KY} the authors introduced Morita algebras as algebras $A$ that are algebras with dominant dimension at least two and a minimal faithful projective-injective module $eA$ such that $eAe$ is selfinjective. Morita algebras contain several important classes of algebras such as Schur algebras $S(n,r)$ for $n \geq r$ or blocks of category $\mathcal{O}$ and provide a useful generalisation of selfinjective algebras. 
At the end of the article \cite{CX} the authors provided three conjectures related to the dominant dimension of algebras. Their third conjecture states the following:
\begin{conjecture}
Suppose two algebras $A$ and $B$ are derived equivalent. If $A$ is a Morita algebra and the dominant dimension of $B$ is at least two then also $B$ is a Morita algebra.
\end{conjecture}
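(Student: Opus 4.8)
The statement as printed is a conjecture, and (as the abstract already announces) it turns out to be \emph{false}; so what follows is a plan for producing a counterexample rather than a proof. The bridge to the conjecture is the standard fact that if $T$ is a tilting module over $A$ then $A$ and $B:=End_A(T)$ are derived equivalent; hence it suffices to exhibit a Morita algebra $A$, a tilting module $T_A$, and to show that $B$ has dominant dimension at least two but is not a Morita algebra.

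The plan is to exploit the intrinsic description of Morita algebras: by \cite{KY} an algebra $B$ is a Morita algebra if and only if $B\cong End_\Lambda(M)$ for a selfinjective algebra $\Lambda$ and a generator $M$, equivalently $domdim(B)\geq 2$ and $fBf$ is selfinjective for the idempotent $f$ with $fB$ the minimal faithful projective-injective module. The verification at the end then splits into two independent checks on $B$: (i) compute the minimal injective coresolution of $B$ and confirm that $I^0$ and $I^1$ are projective, and (ii) identify $fB$ and show that $fBf$ fails to be selfinjective, for instance by exhibiting an indecomposable projective $fBf$-module that is not injective, or a mismatch of the Nakayama permutation of $fBf$.

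First I would restrict the search to a class where dominant dimension is easy to control and Morita algebras are plentiful, namely to algebras $A=End_\Lambda(M)$ with $\Lambda$ a small selfinjective Nakayama algebra and $M$ a generator, all of which are Morita. Then, using a computer algebra system such as QPA, I would enumerate the basic tilting modules $T$ of $A$, form $B=End_A(T)$, and test conditions (i) and (ii); the candidate is any $T$ for which $domdim(B)\geq 2$ while $fBf$ is not selfinjective. Having found such a triple, I would record $A$ by quiver and relations, write $T$ explicitly as a direct sum of indecomposables, compute a quiver-and-relations presentation of $B$, and carry out checks (i) and (ii) by hand so that the example does not rely on the software.

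The main obstacle I anticipate is locating a tilting module that simultaneously keeps the dominant dimension at least two and destroys the selfinjectivity of $fBf$: partial positive results of Chen and Xi already show that for many tilting modules over Morita algebras the endomorphism ring remains a Morita algebra, so the example has to be chosen with care, and the bookkeeping needed to rule out the "obvious" tilting modules and to pin down the smallest working example is where the real effort lies. Once the algebra $B$ is explicitly in hand, steps (i) and (ii) reduce to routine homological computations with its quiver and relations.
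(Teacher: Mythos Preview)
Your proposal is correct and follows essentially the same approach as the paper: the authors also search among Nakayama algebras with QPA, take $A$ to be the Nakayama algebra with Kupisch series $[4,5,4,5]$ (which is Morita since $eAe$ is symmetric Nakayama $[3,3]$), exhibit an explicit tilting module $M=e_0A\oplus e_1A\oplus e_3A\oplus e_1A/e_1J^4$ of projective dimension two, identify $B=End_A(M)$ as the Nakayama algebra with Kupisch series $[4,4,5,5]$, and then carry out exactly your checks (i) and (ii) by hand, finding $domdim(B)=4$ while $fBf$ is Nakayama $[3,4,4]$, hence not selfinjective.
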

In \cite{CX} several special cases of this conjecture were proven. In this article we give a counterexample to this conjecture.

\begin{theorem*}
Let $A$ be the Nakayama algebra with Kupisch series [4,5,4,5] with vertices numbered from 0 to 3. Let $M$ be the module $e_0 A \oplus e_1 A \oplus e_3 A \oplus e_1 A/e_1 J^4$. Then $A$ is a Morita algebra and $M$ is a tilting module of projective dimension two such that the algebra $B:=End_A(M)$ is an algebra of dominant dimension equal to 4 that is not a Morita algebra.
\end{theorem*}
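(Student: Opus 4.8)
The strategy is to describe $A$, the tilting module $M$, and $B=\End_A(M)$ completely explicitly: $A$, $B$, and (as will turn out) all relevant endomorphism rings are Nakayama algebras, and every module in sight is uniserial, so each claim reduces to a finite combinatorial check. Write $P_i:=e_iA$, uniserial of length $c_i$ with $(c_0,c_1,c_2,c_3)=(4,5,4,5)$, and describe modules by their Loewy (composition) series, e.g.\ $P_1=[1,2,3,0,1]$; note $\rad P_1\cong P_2$ and $\rad P_3\cong P_0$. The standard description of indecomposable injectives over a Nakayama algebra (the longest uniserial module with a prescribed socle) shows that $P_1$ and $P_3$ are exactly the indecomposable projective-injective $A$-modules, so $eA=P_1\oplus P_3$ with $e=e_1+e_3$ is the minimal faithful projective-injective module. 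Computing the minimal injective coresolutions of $P_0$ and of $P_2$ shows that in each case $I^0$ and $I^1$ are projective while $I^2$ is not, so $\operatorname{domdim}A=2$; and $eAe$ is the Nakayama algebra with $2$-cycle quiver (generated over $e_1,e_3$ by the length-two paths $1\to3$ and $3\to1$ of $A$) and constant Kupisch series $[3,3]$, hence self-injective. Therefore $A$ is a Morita algebra.

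Next I show $M$ is a tilting module of projective dimension $2$. Put $N:=e_1A/e_1J^4=P_1/\soc P_1$, uniserial of length $4$. Since $\soc P_1=S_1$ and $\rad P_1\cong P_2$, the minimal projective resolution of $N$ is $0\to P_2\to P_1\to P_1\to N\to0$; hence $\operatorname{pd}N=2$ and $\operatorname{pd}M=2$. As $P_0,P_1,P_3$ are projective, checking $\Ext^i_A(M,M)=0$ for $i>0$ reduces to $\Ext^i_A(N,P_0\oplus P_1\oplus P_3\oplus N)=0$ for $i=1,2$, which follows by applying $\Hom_A(-,X)$ to the above resolution and computing with the small spaces $Xe_1$ and $Xe_2$. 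Finally $P_0,P_1,P_3\in\add M$, while the same four-term sequence, read as $0\to P_2\to P_1\to P_1\to N\to0$, is an $\add M$-coresolution of $P_2$ of length $2$; direct-summing these produces an exact sequence $0\to A\to M^0\to M^1\to M^2\to0$ with all $M^j\in\add M$. Hence $M$ is a tilting module with $\operatorname{pd}M=2$, and $A$ and $B=\End_A(M)$ are derived equivalent.

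It remains to analyse $B$. I first compute all Hom-spaces between the indecomposable summands $P_0,P_1,P_3,N$ of $M$ (small integers, read off from composition multiplicities via $\Hom_A(P_i,X)\cong Xe_i$ and the projective resolution of $N$), and then find the quiver of $B$ by deciding which of these morphisms lie in $\operatorname{rad}^2$ of $\add M$. The irreducible morphisms turn out to be exactly $P_0\to P_3\to P_1\to N\to P_0$, so the quiver of $B$ is a $4$-cycle and $B$ is again a Nakayama algebra; since the indecomposable projective $B$-modules $\Hom_A(M,P_0),\Hom_A(M,N),\Hom_A(M,P_1),\Hom_A(M,P_3)$ have dimensions $4,4,5,5$, the algebra $B$ is the Nakayama algebra with Kupisch series $[4,4,5,5]$. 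For this $B$ exactly three of the four indecomposable projectives are injective; the remaining one, of length $4$, has a minimal injective coresolution with $I^0,I^1,I^2,I^3$ projective-injective and $I^4$ not projective, so $\operatorname{domdim}B=4$. Finally, let $fB$ be the sum of the three indecomposable projective-injective $B$-modules, the minimal faithful projective-injective $B$-module. Computing $fBf$ in the same way, its quiver is a $3$-cycle and its indecomposable projectives have dimensions $3,4,4$, so $fBf$ is the Nakayama algebra with Kupisch series $[3,4,4]$; as this is not constant, $fBf$ is not self-injective and hence $B$ is not a Morita algebra. Combined with the previous steps, this contradicts the Conjecture.

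The computations are routine Nakayama bookkeeping once the algebras are identified; the one genuinely delicate step — and the place where an error is easiest to make — is determining the quiver of $B$: one has to check carefully which composites of non-isomorphisms between $P_0,P_1,P_3,N$ span the relevant radical-square spaces, so as to be certain that the quiver is the bare $4$-cycle, with no loops and no further arrows.
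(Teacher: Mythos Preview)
Your proposal is correct and follows essentially the same route as the paper: verify that $A$ is a Morita algebra via the injective coresolutions of $P_0,P_2$ and the identification $eAe\cong$ Nakayama $[3,3]$; check the three tilting axioms for $M$ using the resolution $0\to P_2\to P_1\to P_1\to N\to 0$; identify $B$ as the Nakayama algebra with Kupisch series $[4,4,5,5]$; and then read off $\operatorname{domdim}B=4$ and $fBf\cong$ Nakayama $[3,4,4]$, which is not self-injective. The only methodological difference is in the identification of $B$: the paper quotes Yamagata's theorem that endomorphism rings of projective--or--injective modules over Nakayama algebras are again Nakayama, and then pins down the Kupisch series from one entry of $\rad B/\rad^2 B$ together with the dimensions $\dim\Hom_A(M,M_i)$, whereas you compute the quiver of $B$ by hand by finding the irreducible maps in $\add M$. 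Your route is more elementary (no external citation needed) but, as you note yourself, requires care to rule out extra arrows; the paper's route is quicker once Yamagata's result is granted.
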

Note that $B$ is derived equivalent to $A$, since endomorphism algebras of tilting modules are derived equivalent to the original algebra.
Therefore, our theorem gives a counterexample to the conjecture.
We found the counterexample to the conjecture while experimenting with the GAP-package QPA, see \cite{QPA}.
We thank Hongxing Chen and Changchang Xi for useful discussions in Stuttgart and Changchang Xi for proofreading and useful suggestions.
\section{Proof of the theorem}
In this section we give a proof of the theorem that we group into several smaller lemmas. We assume that the reader is familiar with the basics of the representation theory of finite dimensional algebras as explained for example in \cite{ARS} or \cite{ASS}. We use the conventions of \cite{ASS}. Thus we use right modules and write arrows in quiver algebras from left to right. For background on Nakayama algebras and how to calculate projective or injective resolutions for modules in such algebras we refer to \cite{Mar}.
All algebras will be given by quiver and relations and are connected. 
Recall that the Kupisch series of a Nakayama algebra is just the sequence $[a_0,a_1,...,a_r]$ when $a_i$ denotes the dimension of the indecomposable projective modules corresponding to point $i$.
Let $A$ always be the Nakayama algebra with Kupisch series [4,5,4,5]. Thus $A$ is a quiver algebra with a cyclic quiver. We assume that the vertices are numbered from 0 to 3. The quiver of $A$ looks as follows:
$$\xymatrix@1{ \circ^{0}\ar [r]^{\alpha}  & \circ^{1}\ar [d]^{\beta} &  &  & & &  &  \\ \circ^{3} \ar [u]^{\delta} & \circ^{2} \ar[l]^{\gamma}}.$$ 
We denote the idempotents corresponding to the points $i$ by $e_i$ and the simple modules corresponding to $i$ by $S_i$.
By $J$ we denote the Jacobson radical of an algebra.
\begin{lemma} \label{lemma 1}
$A$ is a Morita algebra with dominant dimension equal to two.
\end{lemma}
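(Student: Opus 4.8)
The strategy is to make the relevant modules fully explicit and then read off both assertions. Since $A$ is a Nakayama algebra with cyclic quiver on the vertices $0,1,2,3$ and Kupisch series $[4,5,4,5]$, every indecomposable $A$-module is uniserial and is determined by its top together with its length; computing as in \cite{Mar}, the indecomposable projectives $P_i:=e_iA$ have, read from the top, the composition factors $P_0=(S_0,S_1,S_2,S_3)$, $P_1=(S_1,S_2,S_3,S_0,S_1)$, $P_2=(S_2,S_3,S_0,S_1)$ and $P_3=(S_3,S_0,S_1,S_2,S_3)$. Dualising the indecomposable projective left $A$-modules (whose lengths happen to be $[4,5,4,5]$ again) gives the indecomposable injectives $I_0=(S_1,S_2,S_3,S_0)$, $I_1=(S_1,S_2,S_3,S_0,S_1)$, $I_2=(S_3,S_0,S_1,S_2)$ and $I_3=(S_3,S_0,S_1,S_2,S_3)$. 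Comparing the two lists, the indecomposable projective-injective modules are exactly $P_1\cong I_1$ and $P_3\cong I_3$.

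To establish the Morita property I would first determine the minimal faithful projective-injective module. As $\soc(A_A)=S_1^2\oplus S_3^2$, the injective envelope of $A$ is $I^0:=I_1^2\oplus I_3^2=P_1^2\oplus P_3^2$; this module is projective, so the dominant dimension of $A$ is at least one, and it is faithful, so $P_1\oplus P_3$ is faithful. Since neither $P_1$ nor $P_3$ is faithful on its own (each has simple socle, whereas $\soc(A_A)$ involves both $S_1$ and $S_3$) and the minimal faithful module is a direct summand of every faithful module, the minimal faithful module equals $P_1\oplus P_3$; it is projective-injective, and writing $e=e_1+e_3$ we get $eA=P_1\oplus P_3$. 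I would then compute $eAe$ directly: one finds $\dim e_1Ae_1=\dim e_3Ae_3=2$, $\dim e_1Ae_3=\dim e_3Ae_1=1$ and $\rad(eAe)^2\neq 0=\rad(eAe)^3$, so $eAe$ is the connected Nakayama algebra with Kupisch series $[3,3]$, hence self-injective. Together with the computation of the dominant dimension below, this proves that $A$ is a Morita algebra in the sense of \cite{KY}.

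It remains to compute the dominant dimension, i.e.\ the initial terms of the minimal injective coresolution $0\to A\to I^0\to I^1\to I^2$. Since $P_1\cong I_1$ and $P_3\cong I_3$, the embeddings $P_1\to I_1$ and $P_3\to I_3$ are isomorphisms, whereas $P_0\hookrightarrow I_3$ and $P_2\hookrightarrow I_1$ have one-dimensional cokernels $S_3$ and $S_1$ respectively; hence $I^0=P_1^2\oplus P_3^2$ is projective and the first cosyzygy is $\Omega^{-1}(A)=S_1\oplus S_3$. Its injective envelope is $I^1=I_1\oplus I_3=P_1\oplus P_3$, again projective, and the next cosyzygy is $\Omega^{-2}(A)=(I_1/\soc I_1)\oplus(I_3/\soc I_3)$, whose injective envelope is $I^2=I_0\oplus I_2$. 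Now $I_0$ has top $S_1$ and length $4$, while the only indecomposable projective with top $S_1$ is $P_1$, of length $5$; so $I_0$, and therefore $I^2$, is not projective, and the dominant dimension of $A$ equals two.

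Almost all of this is routine bookkeeping with uniserial modules over a Nakayama algebra. The two points that genuinely need care are the faithfulness of $P_1\oplus P_3$---which should be deduced from the embedding of $A$ into its injective envelope $P_1^2\oplus P_3^2$, and not, say, from the trace ideal $Ae_1A+Ae_3A$, which is in fact a proper ideal of $A$---and the honest verification that $eAe$ is self-injective.
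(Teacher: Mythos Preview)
Your proof is correct and follows essentially the same approach as the paper: identify $P_1,P_3$ as the projective-injective indecomposables, verify that $eAe$ (with $e=e_1+e_3$) is the self-injective Nakayama algebra with Kupisch series $[3,3]$, and read off the dominant dimension from the minimal injective coresolution. The only difference is organizational---the paper computes the coresolutions of $e_0A$ and $e_2A$ separately (writing the third terms as $e_3A/e_3J^4$ and $e_1A/e_1J^4$, which are your $I_2$ and $I_0$), whereas you compute the coresolution of $A$ all at once and spell out the composition series explicitly.
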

\begin{proof}
The projective-injective indecomposable $A$-modules are $e_1 A$ and $e_3 A$. Thus the minimal faithful projective-injective $A$-module is $eA$ with $e=e_1 + e_3$ and we have that $eAe$ is the symmetric Nakayama algebra with Kupisch series $[3,3]$.
The minimal injective coresolution of $e_0 A$ is as follows:
$$0 \rightarrow e_0 A \rightarrow e_3 A \rightarrow e_3 A \rightarrow e_3 A / e_3 J^4 \rightarrow 0. \ \ (*)$$
As $e_3 A$ is projective-injective and $e_3 A / e_3 J^4$ is not projective but injective, this shows that $e_0A$ has dominant dimension equal to two.
The minimal injective coresolution of $e_2 A$ looks as follows:
$$0 \rightarrow e_2 A \rightarrow e_1 A \rightarrow e_1 A \rightarrow e_1 A / e_1 J^4 \rightarrow 0. \ \ (**)$$
As $e_1 A$ is projective-injective and $e_1 A / e_1 J^4$ is not projective but injective, this shows that $e_2A$ has dominant dimension equal to two.
Since the dominant dimension of an algebra is equal to the minimum of the dominant dimensions of the indecomposable projective modules, we conclude that $A$ has dominant dimension equal to two and thus is a Morita algebra.
\end{proof}

Now let $M:=e_0 A \oplus e_1 A \oplus e_3 A \oplus e_1 A/e_1 J^4$.
Recall that a tilting module is a module $T$ over an algebra $\Lambda$ that has finite projective dimension and $Ext_{\Lambda}^i(T,T)=0$ for all $i>0$ such that the regular module $\Lambda$ has a finite coresolution in $add(T)$.
\begin{lemma}
$M$ is a tilting module of projective dimension two.
\end{lemma}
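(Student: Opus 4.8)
The plan is to check, by hand, the three conditions in the definition of a tilting module recalled just above the statement: that $\operatorname{pd}_A M$ is finite (in fact equal to $2$), that $\Ext^i_A(M,M)=0$ for all $i>0$, and that the regular module $A$ admits a finite coresolution by modules in $\add(M)$. Since $A$ is a Nakayama algebra, every module occurring is uniserial and its minimal projective and injective (co)resolutions are read off directly from the Kupisch series, so each verification reduces to a short bookkeeping with composition factors. Three of the four indecomposable summands of $M$, namely $e_0A$, $e_1A$ and $e_3A$, are projective (and $e_1A$, $e_3A$ are moreover injective by Lemma~\ref{lemma 1}), so only the summand $N:=e_1A/e_1J^4$, which is uniserial of length $4$ with top $S_1$, needs genuine attention.

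For the projective dimension I would first resolve $N$. From $0\to e_1J^4\to e_1A\to N\to 0$ one identifies $e_1J^4$ with the simple socle $S_1$ of $e_1A$, and from $0\to e_1J\to e_1A\to S_1\to 0$ one identifies $e_1J$ with $e_2A$, which is projective; splicing these gives the minimal projective resolution
\[
0\longrightarrow e_2A\longrightarrow e_1A\longrightarrow e_1A\longrightarrow N\longrightarrow 0 ,
\]
so $\operatorname{pd}_A N=2$ and hence $\operatorname{pd}_A M=2$.

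For self-orthogonality, write $M=P\oplus N$ with $P:=e_0A\oplus e_1A\oplus e_3A$ projective; it then suffices to prove $\Ext^i_A(N,X)=0$ for all $i>0$ and all $X\in\{e_0A,e_1A,e_3A,N\}$. Because $\Omega N=S_1$ and $\Omega^2 N=e_2A$ is projective, only $\Ext^1_A(N,X)$ and $\Ext^1_A(S_1,X)$ can possibly be nonzero. The first is the cokernel of the restriction map $\Hom_A(e_1A,X)\to\Hom_A(S_1,X)$ along the socle inclusion $S_1\hookrightarrow e_1A$, and it vanishes automatically unless $S_1$ occurs in $\soc(X)$; among our four modules this happens only for $X=e_1A$, and there the identity of $e_1A$ already restricts to a generator of $\Hom_A(S_1,e_1A)$. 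The second is the cokernel of right multiplication by the arrow $\beta\colon 1\to 2$, viewed as a map $Xe_1\to Xe_2$, and one checks it is surjective in each of the four cases by a short computation with paths. Hence $\Ext^i_A(M,M)=0$ for all $i>0$.

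Finally, for the $\add(M)$-coresolution of $A$: the summands $e_0A$, $e_1A$, $e_3A$ of $A$ already lie in $\add(M)$, while the minimal injective coresolution of the remaining indecomposable projective $e_2A$, computed in Lemma~\ref{lemma 1} as
\[
0\longrightarrow e_2A\longrightarrow e_1A\longrightarrow e_1A\longrightarrow e_1A/e_1J^4\longrightarrow 0 ,
\]
has all of its terms in $\add(M)$. Combining this sequence with the copies of $e_0A$, $e_1A$, $e_3A$ placed in degree $0$ yields an exact sequence $0\to A\to M^0\to M^1\to M^2\to 0$ with $M^0=e_0A\oplus e_1A\oplus e_3A\oplus e_1A$, $M^1=e_1A$ and $M^2=e_1A/e_1J^4$, all lying in $\add(M)$; this is the required coresolution, and therefore $M$ is a tilting module of projective dimension two. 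No step is really hard: the part demanding the most care is the $\Ext$-vanishing above, and as a consistency check one may note that $M$ has exactly four pairwise non-isomorphic indecomposable summands, as many as there are simple $A$-modules, which is necessary for any tilting module.
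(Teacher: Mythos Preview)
Your proof is correct and follows essentially the same plan as the paper: establish $\operatorname{pd}_A M=2$ via the minimal projective resolution of $N=e_1A/e_1J^4$, obtain the $\add(M)$-coresolution of $A$ from the injective coresolution $(**)$ of $e_2A$ in Lemma~\ref{lemma 1}, and verify $\Ext$-vanishing directly. The only noteworthy difference is in the $\Ext$ step: the paper exploits that $e_1A$, $e_3A$ and $N$ are all \emph{injective}, so $\Ext^i_A(-,X)$ vanishes for these three choices of $X$ and the entire self-orthogonality reduces to the single pair $\Ext^1_A(N,e_0A)$ and $\Ext^2_A(N,e_0A)=\Ext^1_A(S_1,e_0A)$; you instead check all four targets $X$ case by case, which is slightly more work but equally valid and arguably more transparent.
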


\begin{proof}
Note that $M$ has three indecomposable projective modules as direct summands where only $e_0A$ is not injective and one indecomposable injective non-projective module, namely $e_1 A/e_1 J^4$.
The following minimal projective resolution of $e_1 A / e_1 J^4$ shows that the projective dimension of $M$ is equal to two:
$$0 \rightarrow e_2 A \rightarrow e_1 A \rightarrow e_1 A \rightarrow e_1 A/ e_1 J^4 \rightarrow 0.$$
Now the exact sequence $(**)$ in the proof of \ref{lemma 1} shows that $A$ has a coresolution in $add(M)$.
What is left to show is that $Ext_A^i(M,M)=0$ for $i=1$ and $i=2$ because $Ext_A^i(M,M)=0$ for $i>2$ since $M$ has projective dimension 2. Note that $\Omega^1(M) = e_1 J^4 \cong S_1$.
We have $Ext_A^1(M,M)=Ext_A^1(e_1 A / e_1 J^4, e_0 A)$ and $Ext_A^2(M,M)=Ext_A^1(\Omega^1(M),M)=Ext_A^1(S_1 , e_0 A)$. Note that in general for a simple module $S$ and a module $N$ over an algebra $\Lambda$, we have $Ext_{\Lambda}^1(S,N)=0$ iff the socle of $I^1(N)$ does not have $S$ as a direct summand when $(I^i(N))$ denotes a minimal injective coresolution of $N$, see for example \cite{Ben} corollary 2.5.4.
This gives us that $Ext_A^1(S_1 , e_0 A)=0$ when looking at the minimal injective coresolution of $e_0 A$ in $(*)$ in the proof of \ref{lemma 1}.
Now we show that $Ext_A^1(e_1 A / e_1 J^4, e_0 A)=0$.
Look at the following short exact sequence:
$$0 \rightarrow e_1 J^4 \rightarrow e_1 A \rightarrow e_1 A / e_1 J^4 \rightarrow 0.$$
We apply the functor $Hom_A(-,e_0 A)$ to this short exact sequence and obtain the following exact sequence:
$$0 \rightarrow Hom_A(e_1 A / e_1 J^4, e_0 A) \rightarrow Hom_A(e_1 A, e_0 A) \rightarrow Hom_A(S_1, e_0 A) \rightarrow Ext_A^1(e_1 A/ e_1 J^4, e_0 A) \rightarrow 0.$$
This gives us that $Ext_A^1(e_1 A/ e_1 J^4, e_0 A)=0$ iff $dim(Hom_A(e_1 A, e_0 A))=dim(Hom_A(e_1 A / e_1 J^4, e_0 A))+dim(Hom_A(S_1, e_0 A))$, which is true since $dim(Hom_A(e_1 A, e_0 A))=1$ and $dim(Hom_A(e_1 A / e_1 J^4, e_0 A))=1$ but $dim(Hom_A(S_1, e_0 A))=0$.
This proves that $Ext_A^i(M,M)=0$ for all $i>0$ and thus that $M$ is a tilting module of projective dimension two.

\end{proof}

Now let $B:=End_A(M)$ be the endomorphism ring of $M$.
\begin{lemma}
$B$ is a Nakayama algebra given by quiver and relations with Kupisch series $[4,4,5,5]$.
\end{lemma}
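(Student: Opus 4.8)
The plan is to determine the quiver and relations of $B$ directly from the morphisms in $\add M$. Write $M=M_1\oplus M_2\oplus M_3\oplus M_4$ with $M_1=e_0A$, $M_2=e_1A$, $M_3=e_3A$ and $M_4=e_1A/e_1J^4$. These four $A$-modules are indecomposable and pairwise non-isomorphic, so $B=\End_A(M)$ is a basic algebra with exactly four simple modules, which we label by $M_1,\dots,M_4$; moreover $B$ is connected, since $A$ is connected and $M$ is a tilting module. Because $A$ is a Nakayama algebra every module in sight is uniserial, and for uniserial modules $U,N$ the space $\Hom_A(U,N)$ has a basis indexed by the submodules $V\subseteq N$ with $\top V\cong\top U$ and $\operatorname{len}(V)\le\operatorname{len}(U)$, the corresponding map being $U\twoheadrightarrow U/UJ^{\operatorname{len}(V)}\xrightarrow{\ \sim\ }V\hookrightarrow N$. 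Reading the Loewy series of the $M_i$ off the Kupisch series $[4,5,4,5]$, this computes all sixteen numbers $\dim_K\Hom_A(M_i,M_j)$ and hence the Cartan matrix of $B$; in particular the indecomposable projective $B$-modules $\Hom_A(M,M_i)$ have $K$-dimension $4,5,5,4$ for $i=1,2,3,4$.

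Next I would compute the Gabriel quiver of $B$: the number of arrows $M_i\to M_j$ equals $\dim_K\bigl(\rad_{\add M}(M_i,M_j)/\rad^2_{\add M}(M_i,M_j)\bigr)$. All the relevant $\Hom$-spaces are at most one-dimensional except $\End_A(M_2)$ and $\End_A(M_3)$, which are two-dimensional with one-dimensional radical; for these the radical already lies in $\rad^2_{\add M}$ (realised by $M_2\twoheadrightarrow M_4\to M_2$ respectively $M_3\to M_1\hookrightarrow M_3$), so $B$ has no loops. For each remaining non-endomorphism $\Hom$-space one tracks images and kernels of the possible composites, always of the shape ``project onto a uniserial quotient, then include it as a submodule'', and finds that precisely four maps fail to lie in $\rad^2_{\add M}$: the inclusion $M_1\cong\rad(e_3A)\hookrightarrow e_3A=M_3$, the canonical epimorphism $M_2=e_1A\twoheadrightarrow e_1A/e_1J^4=M_4$, and the (unique up to scalar) maps $M_3\to M_2$ and $M_4\to M_1$. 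Hence the quiver of $B$ is the oriented cycle
$$M_1\longrightarrow M_3\longrightarrow M_2\longrightarrow M_4\longrightarrow M_1.$$

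Finally I would invoke that a connected algebra whose quiver is an oriented cycle is automatically a Nakayama algebra: if $Q$ is an oriented $n$-cycle then $e_iKQ$ is uniserial, so in any admissible quotient $KQ/I$ every indecomposable projective is uniserial, and dually every indecomposable injective is uniserial. Thus $B$ is a Nakayama algebra, and its Kupisch series is the list of $K$-dimensions of the indecomposable projectives read along the cycle, namely $4,5,5,4$ at $M_1,M_3,M_2,M_4$; starting the list at the vertex $M_4$ instead gives the Kupisch series $[4,4,5,5]$ (the presentation being unique only up to a cyclic rotation of the vertices), as claimed.

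I expect the main obstacle to be the computation of the quiver, that is, checking that every $\Hom$-space other than the four distinguished maps is contained in $\rad^2_{\add M}$; this is a somewhat tedious but elementary case analysis about compositions of homomorphisms between uniserial modules over a Nakayama algebra. Everything else — assembling the Cartan matrix, the implication ``oriented cycle $\Rightarrow$ Nakayama'', and extracting the Kupisch series — is then routine, and the whole computation can in addition be verified mechanically with QPA.
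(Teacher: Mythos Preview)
Your argument is correct but follows a different route from the paper. The paper does not compute the full Gabriel quiver of $B$. Instead it invokes a theorem of Yamagata (\cite{Yam2}) stating that the endomorphism ring of a module over a Nakayama algebra all of whose indecomposable summands are projective or injective is again Nakayama; since each $M_i$ is projective or injective, this gives at once that $B$ is Nakayama. The paper then computes only the four dimensions $\dim_K\Hom_A(M,M_i)=4,5,5,4$ and locates a single arrow in the quiver of $B$ (via the matrix description of $\rad B/\rad^2 B$) joining two vertices whose projectives are both $4$-dimensional; among the cyclic rotations of $4,5,5,4$ this forces the Kupisch series $[4,4,5,5]$.

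Your approach trades the external input (Yamagata's theorem) for a direct and self-contained computation: you determine all four arrows, observe that the quiver is an oriented $4$-cycle, and then use the elementary fact that a connected bound quiver algebra on an oriented cycle is Nakayama. This is more labour---your anticipated ``tedious case analysis'' of $\rad^2_{\add M}$ really is the bulk of the work---but it avoids quoting a theorem from the literature and yields the quiver explicitly. The paper's method is shorter precisely because knowing in advance that $B$ is Nakayama means a single arrow together with the list of projective dimensions already determines the Kupisch series up to rotation.
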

\begin{proof}

By the main theorem of \cite{Yam2}, the endomorphism ring of a module over a Nakayama algebra which only has indecomposable projective or injective modules as a direct summands is again a Nakayama algebra. Also note that $B$ is a basic algebra since $M$ is a basic module and $B$ has simple modules isomorphic to $End_A(M_i)/rad(End_A(M_i))$, which are one-dimensional modules when $M_i$ denote the indecomposable direct summands of $M$. A basic algebra with all simple modules of dimension equal to one is given by quiver and relations.
We therefore just have to determine the Kupisch series of $B$.
We have 
$$B=End_A(e_0 A \oplus e_1 A \oplus e_3 A \oplus e_1 A/e_1 J^4)=$$
$$\begin{pmatrix}[1]
  e_0 A e_0 & e_0 A e_1 & e_0 A e_3 & e_0 J e_1 \\
  e_1 A e_0 & e_1 A e_1 & e_1 A e_3 & e_1 J e_1 \\
  e_3 A e_0 & e_3 A e_1 & e_3 A e_3 & e_3 J e_1 \\
  (e_1 A/ e_1 J^4)e_0 & (e_1 A/ e_1 J^4)e_1 & (e_1 A/ e_1 J^4)e_3 & (e_1 A/ e_1 J^4)e_1 
 \end{pmatrix} .$$
Noting that $e_0 J e_0=0$ and $(e_1 J/ e_1 J^4)e_1=0$, the radical of $B$ is then equal to 
$$\begin{pmatrix}[1]
  0 & e_0 A e_1 & e_0 A e_3 & e_0 J e_1 \\
  e_1 A e_0 & e_1 J e_1 & e_1 A e_3 & e_1 J e_1 \\
  e_3 A e_0 & e_3 A e_1 & e_3 J e_3 & e_3 J e_1 \\
  (e_1 A/ e_1 J^4)e_0 & (e_1 A/ e_1 J^4)e_1 & (e_1 A/ e_1 J^4)e_3 & 0
 \end{pmatrix} .$$
 We have $rad^2(B)=rad(B) rad(B)$ and multiplication gives that the (1,4)-entry of $rad^2(B)$ is equal to $e_0 A e_1 J e_1 +e_0 A e_3 J e_1=0$.
 Thus the (1,4)-entry in $rad(B)/rad^2(B)$ is $e_0 J e_1$, which is non-zero.
 This gives us that there is an arrow in the quiver of $B$ from the first point to the fourth point. Now the projective indecomposable $B$-modules are given by $Hom_A(M,M_i)$. We have $dim(Hom_A(M,e_0A))=4, dim(Hom_A(M,e_1 A))=5 , dim(Hom_A(M,e_3A))=5 $ and $dim(Hom_A(M,e_1A/e_1 J^4))=4$ and we saw that there is an arrow in the quiver of $B$ from a point whose corresponding indecomposable projective module has dimension 4 and a point whose corresponding indecomposable projective module has dimension 4. This gives us that the Kupisch series can only be $[4,4,5,5]$.
\end{proof}

After renumbering we can assume that he quiver of the Nakayama algebra $B$ with Kupisch series [4,4,5,5] looks as follows:
$$\xymatrix@1{ \circ^{0}\ar [r]^{a}  & \circ^{1}\ar [d]^{b} &  &  & & &  &  \\ \circ^{3} \ar [u]^{d} & \circ^{2} \ar[l]^{c}}.$$ 
\begin{lemma}
$B$ has dominant dimension equal to 4 but is not a Morita algebra.
\end{lemma}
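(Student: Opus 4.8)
The plan is to compute with the explicit Nakayama algebra $B$ of Kupisch series $[4,4,5,5]$, whose quiver and relations are now fully determined. First I would write down the four indecomposable projective $B$-modules $P_0,P_1,P_2,P_3$ with $\dim P_0 = \dim P_1 = 4$ and $\dim P_2 = \dim P_3 = 5$, together with their radical-series (Loewy) structure, using the standard description of modules over a Nakayama algebra. From these I would read off which projectives are injective: for a connected Nakayama algebra with cyclic quiver, $P_i$ is injective precisely when $\dim P_i = \dim P_{i+1}+1$ fails in the appropriate direction, so I expect exactly the two "long" projectives $P_2$ and $P_3$ to be projective-injective, and hence $f := e_2 + e_3$ to give the minimal faithful projective-injective module $fB$.

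Next I would verify that $\mathrm{domdim}(B) = 4$ by computing the minimal injective coresolutions of the two non-injective projectives $P_0$ and $P_1$. As in Lemma~\ref{lemma 1}, injective modules over $B$ are again uniserial and easy to describe, and one builds the coresolution of $P_i$ step by step as $0 \to P_i \to I^0 \to I^1 \to \cdots$. I expect each of $P_0$ and $P_1$ to have a coresolution of the form $0 \to P_i \to (\text{proj-inj}) \to (\text{proj-inj}) \to (\text{proj-inj}) \to (\text{injective non-projective}) \to 0$, so that $I^0,I^1,I^2$ are projective but $I^3$ is not, giving dominant dimension $4$; since the dominant dimension of the algebra is the minimum over indecomposable projectives and the injective ones contribute $\infty$, this yields $\mathrm{domdim}(B)=4$. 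This is essentially a bookkeeping computation with uniserial modules and I do not expect it to be the obstacle.

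The substantive point is to show $B$ is \emph{not} a Morita algebra, i.e.\ that $fBf$ is not selfinjective, where $fB$ is the minimal faithful projective-injective module found above. Here $fBf = \End_B(fB)$, and since $fB = P_2 \oplus P_3$ with both summands projective-injective of dimension $5$, I would compute $fBf$ directly as a basic algebra. The key is that $fBf$ is the endomorphism algebra of two uniserial modules, so it is again given by quiver and relations; I would determine its quiver (expecting it to again be a cyclic Nakayama quiver on two vertices) and its Kupisch series. The point will be that the two projective $fBf$-modules do \emph{not} have the same dimension — in contrast to the symmetric Nakayama algebra $[3,3]$ appearing for $A$ in Lemma~\ref{lemma 1} — so $fBf$ is a non-self-injective Nakayama algebra (a connected Nakayama algebra is selfinjective iff all indecomposable projectives have the same length). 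Concretely I would compute $\dim e_i B e_j$ for $i,j \in \{2,3\}$ via $\Hom_B(P_j, P_i)$, equivalently by counting composition factors, and exhibit the asymmetry in the resulting Kupisch series.

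The main obstacle I anticipate is purely computational: correctly pinning down the Loewy structure of the projectives and injectives of $B$ (in particular getting the cyclic ordering of vertices and the relations right after the renumbering) and then correctly identifying $fBf$ and its Kupisch series. Once the structure of $fBf$ is in hand, the conclusion is immediate from the self-injectivity criterion for Nakayama algebras. As a sanity check I would also verify $B$ is not selfinjective as an algebra (so the failure of the Morita property is genuinely about $fBf$, not a degenerate case), and note that, combined with the previous three lemmas and the remark that endomorphism algebras of tilting modules are derived equivalent to the original algebra, this completes the proof of the Theorem and hence furnishes the desired counterexample to the conjecture of Chen and Xi.
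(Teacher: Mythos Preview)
Your overall strategy matches the paper's, but your expectation about which indecomposable projectives of $B$ are injective is wrong, and this error propagates to the heart of the argument. For the Nakayama algebra with Kupisch series $[4,4,5,5]$ (arrows $0\to1\to2\to3\to0$), the projective $P_1=e_1B$ has Loewy series $S_1,S_2,S_3,S_0$, hence socle $S_0$; since the injective envelope $I_0$ also has length $4$, we have $P_1\cong I_0$. Thus $P_1$ is projective-injective even though it is one of the ``short'' projectives, and the minimal faithful projective-injective module is $eB$ with $e=e_1+e_2+e_3$, not $f=e_2+e_3$. Indeed $fB=P_2\oplus P_3$ is not faithful: $P_1$ does not embed into $P_2\oplus P_3$ (the length-$4$ submodules of $P_2$ and $P_3$ have tops $S_3$ and $S_0$, respectively, never $S_1$).

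This matters for the punchline. With the correct idempotent $e=e_1+e_2+e_3$ one finds that $eBe$ is the Nakayama algebra with Kupisch series $[3,4,4]$, which is not selfinjective, so $B$ is not a Morita algebra. With your $f=e_2+e_3$, however, one computes $\dim e_iBe_j$ for $i,j\in\{2,3\}$ to be $2,1,1,2$, so $fBf$ has Kupisch series $[3,3]$ and \emph{is} selfinjective; following your plan literally would therefore yield the opposite (and incorrect) conclusion. Two smaller corrections: only $e_0B$ is non-injective, so there is a single coresolution to compute; and that coresolution has four projective-injective terms before the non-projective injective $D(Be_1)$ appears (your sketched shape with three such terms would give dominant dimension $3$, not $4$). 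Once you correct the identification of the projective-injectives, your plan and the paper's proof coincide.
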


\begin{proof}
The projective-injective indecomposable $B$-modules are $e_1 B, e_2 B$ and $e_3B$.   Thus the minimal faithful projective-injective $B$-module is $eB$ with $e=e_1 +e_2 +e_3$. The algebra $eBe$ is the Nakayama algebra with Kupisch series $[3,4,4]$, which is not selfinjective.
What is left to show it that $B$ has dominant dimension equal to 4.
We give the minimal injective coresolution of $e_0 B$:
$$0 \rightarrow e_0 B \rightarrow e_3 B \rightarrow e_3 B \rightarrow e_2 B \rightarrow e_2 B \rightarrow D(B e_1) \rightarrow 0.$$
This shows that $e_0 B$ has dominant dimension equal to 4 and also that $B$ has dominant dimension equal to 4 since the dominant dimension of the regular module equals the minimum of the dominant dimensions of the indecomposable projective modules.
\end{proof}

Combining all the results of this section we obtain the following theorem:
\begin{theorem}
Let $A$ be the Nakayama algebra with Kupisch series [4,5,4,5] with vertices numbered from 0 to 3. Let $M$ be the module $e_0 A \oplus e_1 A \oplus e_3 A \oplus e_1 A/e_1 J^4$. Then $A$ is a Morita algebra and $M$ is a tilting module of projective dimension two such that the algebra $B:=End_A(M)$ is an algebra of dominant dimension equal to 4 that is not a Morita algebra.
\end{theorem}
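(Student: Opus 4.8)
The statement collects precisely the four lemmas established above, so the plan is to assemble them in order. First, by Lemma~\ref{lemma 1} the algebra $A$ is a Morita algebra: its indecomposable projective-injective modules are $e_1A$ and $e_3A$, so the minimal faithful projective-injective module is $eA$ with $e=e_1+e_3$, the algebra $eAe$ is the symmetric (hence selfinjective) Nakayama algebra with Kupisch series $[3,3]$, and the injective coresolutions $(*)$ of $e_0A$ and $(**)$ of $e_2A$ show $\operatorname{domdim}A=2\geq 2$.

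Next I would invoke the second lemma to see that $M=e_0A\oplus e_1A\oplus e_3A\oplus e_1A/e_1J^4$ is a tilting module of projective dimension two. Three of the four summands are indecomposable projectives and contribute nothing; the remaining summand $e_1A/e_1J^4$ has the minimal projective resolution $0\to e_2A\to e_1A\to e_1A\to e_1A/e_1J^4\to 0$, so $\operatorname{pd}M=2$. The sequence $(**)$ exhibits a coresolution of $A$ in $\add(M)$, and the vanishing $\Ext^i_A(M,M)=0$ for $i>0$ reduces, using $\Omega^1(M)\cong S_1$, to $\Ext^1_A(S_1,e_0A)=0$ and $\Ext^1_A(e_1A/e_1J^4,e_0A)=0$; the first is immediate from the shape of $(*)$, and the second follows from a dimension count of the relevant $\Hom$-spaces after applying $\Hom_A(-,e_0A)$ to $0\to e_1J^4\to e_1A\to e_1A/e_1J^4\to 0$.

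Then I would feed $M$ into the third and fourth lemmas. By the main result of \cite{Yam2}, since $M$ is a direct sum of indecomposable projectives and injectives over a Nakayama algebra, $B=\End_A(M)$ is again a Nakayama algebra; it is basic with one-dimensional simples, hence given by quiver and relations, and computing $\dim\Hom_A(M,e_0A)=4$, $\dim\Hom_A(M,e_1A)=5$, $\dim\Hom_A(M,e_3A)=5$, $\dim\Hom_A(M,e_1A/e_1J^4)=4$ together with the existence of an arrow between two vertices whose indecomposable projectives have dimension $4$ forces the Kupisch series of $B$ to be $[4,4,5,5]$. After renumbering, the projective-injective $B$-modules are $e_1B$, $e_2B$, $e_3B$, so $eBe$ with $e=e_1+e_2+e_3$ is the Nakayama algebra with Kupisch series $[3,4,4]$, which is \emph{not} selfinjective; hence $B$ is not a Morita algebra. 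On the other hand the minimal injective coresolution $0\to e_0B\to e_3B\to e_3B\to e_2B\to e_2B\to D(Be_1)\to 0$ shows $\operatorname{domdim}B=4$. Since $M$ is a tilting module, $B$ is derived equivalent to $A$, so $(A,B)$ is the desired counterexample.

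The genuine content --- all of it already carried out in the lemmas --- sits in the explicit homological bookkeeping inside the Nakayama algebras $A$ and $B$: writing down the minimal injective coresolutions, checking the $\Ext$-vanishing for $M$, and pinning down the Kupisch series of $B$ from the $\Hom$-dimensions. I expect the last of these to be the most delicate step, since one must simultaneously track the dimensions of the projectives $\Hom_A(M,M_i)$ and verify that the renumbering of the vertices of $B$ is consistent with the position of the arrow joining the two dimension-$4$ projectives.
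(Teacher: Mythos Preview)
Your proposal is correct and follows exactly the paper's approach: the theorem is proved by combining the four preceding lemmas, and you recapitulate each of them accurately, including the key computations (the coresolutions $(*)$ and $(**)$, the projective resolution of $e_1A/e_1J^4$, the $\Hom$-dimension count giving the Kupisch series $[4,4,5,5]$, and the final injective coresolution of $e_0B$). The only addition is your concluding remark on derived equivalence, which the paper states separately but which fits naturally here.
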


\end{document}